\documentclass[reqno,12pt]{amsart}

\title[Fast Tensor Transforms]{Fast Tensor Transforms and Ring-Valued
  Orthogonal Matrices: An Application to Cryptography}

\author{Jacques Peyri\`ere} \address{Institut de Math\'ematiques
  d'Orsay, CNRS, Universit\'e Paris-Saclay, 91405
  Orsay, France.} \email{jacques.peyriere@universite-paris-saclay.fr} 
\email{peyriere@phare.normalesup.org}

\subjclass{ 15B10, 65T50, 94A60, 15A69, 11T71.}

\keywords{fast tensor product, orthogonal matrix on rings, symmetric
  cryptosystem}

\usepackage{amsmath,amsfonts,amssymb,mathrsfs} 
\newcommand*{\alphabet}{{\mathscr A}}
\newcommand*{\Id}{\mathrm{Id}}
\newcommand*{\mat}[1]{{\mathsf #1}}
\newcommand{\radem}{{\mathsf r}}
\newcommand*{\rev}[1]{{\overleftarrow{#1}}}
\newcommand*{\ring}{{\mathscr R}}
\newcommand{\walsh}{{\mathsf w}}

\newcommand*{\trsp}[1]{{}^{\mathsf t}#1}
\newtheorem{theorem}{Theorem}
\newtheorem{lemma}[theorem]{Lemma}
\theoremstyle{definition}
\newtheorem{remark}[theorem]{Remark}
\newtheorem*{remark*}{Remark}
\begin{document}

\begin{abstract}
  We present a generalization of the Fast Fourier and fast Walsh
  transform algorithms to tensor products of $n$ arbitrary $q\times q$
  matrices over a commutative ring, reducing the cost of applying such
  a tensor product from $q^{2n}$ to $nq^{n}$ ring operations. We then
  give an explicit construction of orthogonal matrices over a
  commutative unitary ring, starting from a prescribed first row, and
  specialize this construction to $\ring={\mathbb Z}/256{\mathbb Z}$,
  counting the number of admissible parameter choices for each matrix
  size~$q$. Combining these two ingredients, we propose a symmetric
  cryptosystem in which the secret key determines~$n$ orthogonal
  matrices over ${\mathbb Z}/256{\mathbb Z}$ whose tensor product
  encrypts a block of $q^n$ bytes; encryption and decryption both
  exploit the fast tensor product algorithm, reducing the per-block
  cost from $q^{2n}$ to $nq^n$ byte multiplications. We tabulate key
  size, block size, and computational cost for practical parameter
  ranges $2\le q\le 12$, $2\le n\le 6$, and give three examples of
  implementations.
\end{abstract}

\maketitle

\section{Introduction}

In the first section, we revisit the well-known Fast Fourier Transform
algorithm and its analogue for the Walsh transform.

The second section deals with a completely different topic. We give a
construction of square matrices $\mat{M}$ on a unitary commutative
ring such that $\trsp{\mat{M}}\mat{M}=\Id$, where $\Id$ is the unit
matrix of suitable dimension and $\trsp{\mat{M}}$ stands for the
transpose of~$\mat{M}$ (of course we will call them orthogonal
matrices). The particular case of ${\mathbb Z}/256{\mathbb Z}$ is
treated.

In the third section, we propose a symmetric cryptosystem based on
these considerations and give three examples of implementations.

\section{Fast Tensor Product}

Let~$q$ be an integer larger than 1 and
$\alphabet = \{0,1,2,\dots,q-1\}$. Let
$ \alphabet^* = \displaystyle\bigcup_{n\ge 0} \alphabet^n$ stand for
the set of words on the alphabet~$\alphabet$ (as usual, the only
element of $\alphabet^0$ is $\epsilon$, the empty word). The set of
nonempty words on the alphabet~$\alphabet$ is denoted by
$\alphabet^+$.

We number the letters of a nonempty word from~0, starting from the
left. If $\xi=\xi_0\xi_1\dots\xi_{n-1}\in \alphabet^+$, the length~$n$
of this word is denoted by~$|\xi|$, and the reverse word
$\xi_{n-1}\xi_{n-2}\dots\xi_{0}$ will be denoted as $\rev{\xi}
$.

We also denote the concatenation of words simply by juxtaposition:
$vw=v_0v_1\cdots v_{|v|-1}w_0w_1\cdots w_{|w|-1}$.\medskip

Let $\mat{R}_1,\mat{R}_2,\cdots,\mat{R}_{n}$ be square matrices with
coefficients in a commutative ring~$\ring$, indexed by
$\alphabet\times\alphabet$. More precisely
$$ \mat{R}_\nu =  \left(\radem_{\nu,i}^j\right)_{(i,j)\in\alphabet^2} \text{\quad for\quad} 1\le \nu \le n
$$

When $1\le m\le n$, $v=v_1v_2\cdots v_m\in \alphabet^m$ and
$w=w_1w_2\cdots w_m\in \alphabet^m$ we set
$$\walsh_v^w = \prod_{j=1}^{m} \radem_{j,v_j}^{w_j}.$$
Then the matrix $R=\left(\walsh_v^w\right)_{v,w\in
  \alphabet^n}$ is the tensor product $\mat{R}_1\otimes
\mat{R}_2\otimes\cdots\otimes \mat{R}_n$.

We shall freely consider the rows of matrices as functions. This
means that we shall consider $\walsh_v^w$ and
$\walsh_v(w)$ as synonyms.

\begin{lemma}[Fast Tensor Product] Let
  $X=\left(X(w)\right)_{w\in \alphabet^n}$ be a vector, also
  considered as a function on $\alphabet^n$. Define a sequence of
  vectors in the following way.
  \begin{enumerate} \item $X_0 =X$,
  \item
    $\displaystyle X_{m}(viw) = \sum_{j\in\alphabet}
    \radem_{m,i}^{j}X_{m-1}(jvw)$ for $1\le m\le n$, $i\in \alphabet$,
    $|v|= n-m$, and $|w|=m-1$.
  \end{enumerate}
  Then, for $0\le m\le n$,
  \begin{equation}\label{f1}
  X_m(v\rev{w}) =
  \sum_{|u|=m} \walsh_w^uX_0(uv)\text{ for } |v|=n-m \text{ and } |w|=m.
\end{equation}
In particular
$\displaystyle X_n(\rev{w}) = \sum_{|u|=n} \walsh_w^u X(u) \text{
  for } |w|=n.$
\end{lemma}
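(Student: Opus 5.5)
The plan is to prove \eqref{f1} by induction on $m$. The final assertion is then the case $m=n$, where $v=\epsilon$.

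\textbf{Base case.} For $m=0$ we have $w=\epsilon$, and $\walsh_\epsilon^\epsilon=1$ as an empty product. The only word of length $0$ is $u=\epsilon$, so the right-hand side reduces to $X_0(v)$, which matches the left-hand side.

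\textbf{Inductive step.} Assume \eqref{f1} holds at rank $m-1$, with $1\le m\le n$. Fix $|v|=n-m$ and $|w|=m$, and write $w=w'i$ with $|w'|=m-1$ and $i\in\alphabet$. Then $\rev{w}=i\,\rev{w'}$, so
\[
X_m(v\rev{w}) = X_m(v\,i\,\rev{w'}).
\]
The first step is to match this with the defining recursion. The recursion is stated for arguments of the form $v i w$ with $|v|=n-m$ and $|w|=m-1$, which is exactly our shape with $\rev{w'}$ in place of $w$. It gives
\[
X_m(v i\rev{w'})=\sum_{j\in\alphabet}\radem_{m,i}^j\,X_{m-1}(j v\rev{w'}).
\]
Next, $jv$ has length $n-m+1=n-(m-1)$ and $w'$ has length $m-1$, so the induction hypothesis applies with $jv$ in place of $v$. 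It yields
\[
X_{m-1}(jv\rev{w'})=\sum_{|u'|=m-1}\walsh_{w'}^{u'}\,X_0(u'jv).
\]
Substituting and setting $u=u'j$, which runs over all words of length $m$, we get
\[
X_m(v\rev{w})=\sum_{|u|=m}\walsh_{w'}^{u'}\,\radem_{m,i}^{j}\,X_0(uv).
\]

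\textbf{Identifying the coefficient.} It remains to check that $\walsh_{w'}^{u'}\radem_{m,i}^j=\walsh_w^u$. This follows from the product definition of $\walsh$: the last letters of $w$ and $u$ are $i$ and $j$, sitting at position $m$, and that position uses the matrix $\mat{R}_m$.

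\textbf{Main obstacle.} The only delicate point is index bookkeeping. The letters of $w$ in the definition of $\walsh$ are numbered $1,\dots,m$ (not from $0$), and the reversal must be tracked carefully. The crucial observation is that appending $i$ at the right end of $w$ puts it at the left end of $\rev{w}$. That is precisely the slot the recursion rewrites, and it carries the factor $\mat{R}_m$. Once this convention is fixed, the argument is a routine reindexing of the sum.
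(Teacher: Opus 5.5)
Your proof is correct and follows essentially the same route as the paper: induction on $m$, splitting off the last letter $i$ of $w$, applying the recursion, then the induction hypothesis with $jv$ in place of $v$, and absorbing $\radem_{m,i}^j$ into $\walsh_w^u$ with $u=u'j$. The only difference is that you start the induction at $m=0$, using the empty-product convention $\walsh_\epsilon^\epsilon=1$, which the $m=0$ case of the statement needs anyway. The paper instead starts at $m=1$, which is immediate from the definition.
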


\proof Formula~\eqref{f1} holds for $m=1$. Suppose it holds for
some~$m$. Then, for $i\in \alphabet$, $w\in \alphabet^m$ and
$v\in \alphabet^{n-m-1}$,
\begin{eqnarray*}
  X_{m+1}(v\rev{wi}) &=& \sum_{j\in\alphabet}\radem_{m+1,i}^{j} X_{m}(jv\rev{w})\\
                     &=& \sum_{j\in\alphabet}\sum_{u\in \alphabet^m}
                         \walsh_w^u\radem_{m+1,i}^{j}X_0(ujv)\\
                     &=&  \sum_{i=0}^{q-1}\sum_{u\in \alphabet^m} \walsh_{wi}^{uj}X_0(ujv)\\
  &=& \sum_{u\in \alphabet^{m+1}} \walsh_{wi}^u X_0(uv).
\end{eqnarray*}
So Formula~\eqref{f1} holds for $m+1$.\medskip

This lemma shows that, up to a permutation of its coefficients, $X_n$
equals the product $\mat{R}X$. The advantage is that this computation
requires only~$nq^n$ multiplications, whereas the naive computation
requires~$q^{2n}$ multiplications. This is in the same spirit as the
fast Fourier transform~\cite{Cooley}, the fast Walsh
transform~\cite{Yates}, and related algorithms~\cite{ahmed,blahut,winograd}.

\section{Construction of some orthogonal matrices on a ring}
\subsection{Orthogonal matrices whose first row is given}

Let $q$ be an integer larger than~2 and $(a_j)_{0\le j<q}$ be real
numbers such that $0< |a_0|< 1$ and $\sum a_j^2= 1$. Consider the
row-matrix $L=(a_1\ a_2\ \cdots\ a_{q-1})$.

We are looking for a vector $\mat{V}$ and for a square
matrix~$\mat{A}$ such that
$$\mat{U} = 
\begin{pmatrix}
a_0&\mat{L}\\\mat{V}&\mat{A}
\end{pmatrix}
$$
is an orthogonal matrix.

This means that we must have
\begin{eqnarray}
  &&a_0^2+\trsp{\mat{V}}\mat{V} = 1,\label{e1}\\ &&
                                               a_0\mat{L}+\trsp{\mat{V}}\mat{A}=0,\label{e2}\\ && \trsp{\mat{L}}\mat{L}
                                                                                             +\trsp{\mat{A}}\mat{A} = \mat{I}_{q-1}.\label{e3}
\end{eqnarray}

Observe that $\mat{L}\trsp{\mat{L}}= 1-a_0^2$ and
$\bigl( \trsp{\mat{L}}\mat{L}\bigr)^2 =
(1-a_0^2)\,\trsp{\mat{L}}\mat{L}$. So we have
$\left(\mat{I}-\trsp{\mat{L}}\mat{L}\right) \bigl(\mat{I} +
a_0^{-2}\,\trsp{\mat{L}}\mat{L}\bigr)= \mat{I}$, which shows that any
solution to~\eqref{e3} is invertible.

Suppose \eqref{e2} and~\eqref{e3} are fulfilled. Then
\begin{eqnarray*}
\trsp{\mat{V}}\mat{V} &=& a_0^2\mat{L}\bigl(
\trsp{\mat{A}}\mat{A}\bigr)^{-1}\,\trsp{\mat{L}}\\
                      &=& a_0^2\mat{L}(1+a_0^{-2}\,\trsp{\mat{L}}\mat{L})\trsp{\mat{L}}\\
&=& a_0^2\bigl((1-a_0^2)+a_0^{-2}(1-a_0^2)^2\bigr)\\
                 &=& 1-a_0^{2}.
\end{eqnarray*}
This means that~\eqref{e1} is also fulfilled.

If $\mat{A}$ and $\mat{B}$ are solutions to~\eqref{e3}, we have
$\trsp{\mat{A}}\mat{A}=\trsp{\mat{B}}\mat{B}$,\\ i.e.,
$\trsp{\bigl(\mat{A}\mat{B}^{-1}\,\bigr)}(\mat{A}\mat{B}^{-1}) =
\mat{I}$.  This means that $\mat{A} = \mat{C}\mat{B}$, where~$\mat{C}$
is any orthogonal matrix.

Now, we show that there are solutions to~\eqref{e3} of the form\\
$\mat{A} = \mat{I} + t\,\trsp{\mat{L}}\mat{L}$. Indeed, for such a
matrix~$\mat{A}$, we have
$$\trsp{\mat{L}}\mat{L}+\trsp{\mat{A}}\mat{A} - \mat{I} = \trsp{\mat{L}}\mat{L} +
\bigl(\mat{I}+t\,\trsp{\mat{L}}\mat{L}\bigr)^2-\mat{I} =
\bigl(1+2t+(1-a_0^2)t^2\bigr)\,\trsp{\mat{L}}\mat{L}.$$
Therefore $\mat{A}$ is a solution to~\eqref{e3} if and only if\quad
$t= -\frac{1}{1+\varepsilon a_0}$, where $\varepsilon=\pm1$. This
yields two matrices $\mat{A}_{1}$ and $\mat{A}_{-1}$. A simple
computation yields the corresponding~$\mat{V}_\varepsilon$:
$\mat{V}_\varepsilon = -\varepsilon\,\trsp{\mat{L}}$.

To summarize, we obtain two particular solutions to our problem:
\begin{equation}
  \mat{U}_\varepsilon =
  \begin{pmatrix}a_0&\mat{L}\\
    -\varepsilon\trsp{\mat{L}}& \mat{I}-(1+\varepsilon a_0)^{-1}\,\trsp{\mat{L}}\mat{L}
  \end{pmatrix}
\end{equation}
for $\varepsilon = \pm1$.\medskip

So the general solution is
\begin{equation}\label{general}
\begin{pmatrix}
  a_0&\mat{L}\\
  -a_0\mat{M}\mat{A}_1^{-1}\,\trsp{\mat{L}}& \mat{M}\mat{A}_1
\end{pmatrix}
\end{equation}
where $\mat{A}_1= \mat{I}-(1+a_0)^{-1}\,\trsp{\mat{L}}\mat{L}$ and
$\mat{M}$ is any orthogonal matrix.

\begin{remark}\label{iter}
  In Formula~\eqref{general}, we can use an orthogonal matrix~$\mat{M}$
  constructed in the same way as $\mat{A}_1$ by imposing its first
  row. This process can be iterated.
\end{remark}

\subsection{Orthogonal matrices with entries in a ring}\label{orth}

If the $a_j$ (for $0\le j< q$) are elements of a commutative
unitary ring~$\mathscr R$ such that $1+\varepsilon a_0$ is a unit of
$\mathscr R$ and $\sum_{0\le j< q} a_j^2=1$, the matrix
\begin{equation}\label{mat}
U_\varepsilon =
\begin{pmatrix}
  a_0&\mat{L}\\
  -\varepsilon \trsp{\mat{L}}& \mat{I}-(1+\varepsilon a_0)^{-1}\,\trsp{\mat{L}}\mat{L}
\end{pmatrix}
\end{equation}
satisfies $\trsp{\mat{U}_\varepsilon}\mat{U}_\varepsilon =\mat{I}.$\medskip

Fulfilling the relation $\sum_{0\le j< q} a_j^2=1$ is easy:
take $(b_1,b_2,\cdots,b_{q-1}) \in {\mathscr R}^{q-1}$ such that
$1+\sum_{1\le j<q} b_j^2$ is a unit, and set
\begin{equation}\label{stereo}
a_0 = \frac{1-\sum_{1\le j<q} b_j^2}{1+\sum_{1\le j<q} b_j^2} \text{ and } a_k = \frac{2b_k}{1+\sum_{1\le j<q} b_j^2} \text{ for } 1\le k<q.
\end{equation}
Then if 2 is a unit, $U_1$ is well defined, and $U_{-1}$ is defined if
$2\sum_{1\le j<q} b_j^2 $ is a unit. But the situation is a bit more
subtle as shown in the next section.

\subsubsection{The case
  ${\mathscr R}= {\mathbb Z}/256{\mathbb Z}$}\label{octets}\
\vspace*{1ex}

To construct $U_\varepsilon$ we take
$(b_1,b_2,\cdots,b_{q-1}) \in \{0,1,\dots,255\}^{q-1}$, such that
$\sum_{1\le j<q} b_j^2 \equiv 1 \mod 4$. Then the fractions
in~\eqref{stereo}, once reduced, have odd denominators and the
numerator of $a_0$ is even. This means that Formulas~\eqref{mat}
and~\eqref{stereo} make sense in ${\mathbb Z}/256{\mathbb Z}$.

When $q$ runs from 2 to 13, the numbers $\nu(q)$ of possible vectors
$(b_1,b_2,\cdots,b_{q-1})$ are respectively\\
$2^7, 2^{15}, 3\cdot 2^{21}, 2^{30}, 3\cdot 2^{36}, 3\cdot 2^{44},
7\cdot 2^{51}, 2^{62}, 17\cdot 2^{66}, 17\cdot 2^{74}, 3\cdot 11\cdot
2^{81},2^{94}$.

Note that each vector gives rise to two matrices. These
numbers~$\nu(q)$ were calculated using the Maple computer algebra
system, via the following algorithm. For $m\ge 1$ and $j=0,1,2,3$, let
$$E_m(j) = \left\{x=\sum_{1\le k\le m} y_k^2\ :\ x\equiv j \mod 4.\ \  y_k \in {\mathbb Z}/256{\mathbb Z}\right\},
$$
and for $x\in \bigcup_{j=0}^3 E_m(j)$ let $N_m(x)$ stand for the
number of ways of writing $x=\sum_{1\le k\le m} y_k^2$. If
$x_1\in E_1(j_1)$ and $x_2\in E_m(j_2)$, then
$x = x_1+x_2\in E_{m+1}(j)$ whenever $j\equiv j_1+j_2 \mod 4$, and
$N_1(x_1)N_m(x_2)$ contributes to the computation of~$N_{m+1}(x)$.
Finally, $\nu(q)=N_{q-1}(1)$. We have not further investigated a
formula for~$\nu(q)$.

\section{Some symmetric cryptosystems}

\subsection{A first symmetric cryptosystem}

We wish to take advantage of the fact that computers handle
arithmetic modulo 256 very efficiently. So, we use the
ring~${\mathscr R}= {\mathbb Z}/256{\mathbb Z}$.\medskip

Positive integers~$n$ and~$q\ge 2$ will be chosen later. For
each~$j\in\{1,2,\cdots,n\}$, we choose at
random~$\varepsilon_j\in \{0,1\}$ and $q-1$ numbers modulo 256,
$(b_i^j)_{1\le i< q}$, fulfilling the requirements of
paragraph~\ref{octets}. This set of $nq$ numbers constitutes the key,
whose size is therefore $n(8q-7)$ bits. The number of such keys is
$\bigl(2\nu(q)\bigr)^n$.

From this key, we can construct, according to Section~\ref{orth},
$n$~orthogonal matrices, $\mat{R}_1,\mat{R}_2,\cdots,\mat{R}_{n}$, of
dimension~$q$, with coefficients in ${\mathbb Z}/256{\mathbb Z}$. The
plaintext to be encrypted is a block of $q^n$ bytes, i.e., a
vector~$X$ in $\left({\mathbb Z}/256{\mathbb Z}\right)^{q^n}$. The
ciphertext is
$\mat{R}_1\otimes \mat{R}_2\otimes\cdots\otimes \mat{R}_nX$.
Encryption and decryption are both performed using FTP. Both
encryption and decryption involve $nq^n$ byte multiplications, i.e.,
$M\log_q M$, where~$M$ stands for the length of the message.\bigskip

Consider the following table to choose an implementation of this
cryptosystem meeting the desired requirements in terms of security and
speed.

\begin{center}\tiny
$\begin{array}{cccccc}
0 & 2 & 3 & 4 & 5 & 6 
\\
 2 & 16,18,5,3 & 24,27,6,5 & 32,36,7,6 & 40,45,8,8 & 48,54,9,9 
\\
 3 & 32,34,7,5 & 48,51,8,7 & 64,68,10,9 & 80,85,11,11 & 96,102,13,13 
\\
 4 & 46,50,7,5 & 69,75,9,8 & 92,100,11,10 & 115,125,13,13 & 138,150,15,15 
\\
 5 & 62,66,8,6 & 93,99,10,9 & 124,132,13,12 & 155,165,15,14 & 186,198,17,17 
\\
 6 & 76,82,9,7 & 114,123,11,10 & 152,164,14,13 & 190,205,16,16 & 228,246,19,19 
\\
 7 & 92,98,9,7 & 138,147,12,11 & 184,196,15,14 & 230,245,18,17 & 276,294,20,20 
\\
 8 & 108,114,9,7 & 162,171,12,11 & 216,228,15,14 & 270,285,18,18 & 324,342,21,21 
\\
 9 & 126,130,10,8 & 189,195,13,12 & 252,260,16,15 & 315,325,19,19 & 378,390,23,22 
\\
 10 & 142,146,10,8 & 213,219,13,12 & 284,292,17,16 & 355,365,20,19 & 426,438,23,23 
\\
 11 & 158,162,10,8 & 237,243,14,12 & 316,324,17,16 & 395,405,21,20 & 474,486,24,24 
\\
 12 & 174,178,11,9 & 261,267,14,13 & 348,356,18,17 & 435,445,21,21 & 522,534,25,25 
\end{array}
$
\end{center}
Rows are numbered according to~$q$ ($2\le q\le 12$), and columns
according to~$n$ ($2\le n\le 6$). Each entry has four terms:
\begin{itemize}
\item [1 --] $\log_2$ of the number of keys: $n\log_2(2\nu(q))$,
  rounded to the largest integer below,
\item [2 --] the key length: $n(8q-7)$ bits,
\item [3 --] $\log_2$ of the block size (bits): $\log_2 8q^n$, rounded to the smallest integer above,
\item [4 --] $\log_2$ of the number of byte multiplications for
  encryption or decryption: $\log_2(nq^n)$ rounded to the smallest
  integer above.
\end{itemize}

It should be noted that this system is somewhat vulnerable to
chosen-plaintext attacks~\cite{menezes}, due to the tensor structure of
$\mat{R} = \mat{R}_1\otimes \mat{R}_2\otimes\cdots\otimes \mat{R}_n$,
and the fact that a block is always encoded in the same way.  For
instance, the knowledge of the first $q$ columns of $\mat{R}$
unveils~$\mat{R}_1$ and the first column of
$\mat{R}_2\otimes \mat{R}_3\otimes\cdots\otimes \mat{R}_n$.

We must therefore eliminate this vulnerability while thwarting
differential cryptanalysis~\cite{biham}. This is the subject of the
next section.

\subsection{A better cryptosystem}

In this section, $q$, $n$, $\nu$, and $\mat{R}$ have the same meanings
as previously. The addition in the ring
${\mathbb Z}/256{\mathbb Z}^{q^n}$ is denoted by
$\oplus$; this is the addition modulo 256 component by component.

To secure the system we use the CBC technique. Let
$c_1\in ({\mathbb Z}/256{\mathbb Z})^{q^n}$ be the plaintext to be
encrypted.

We choose at random an element $c_0$ in
$({\mathbb Z}/256{\mathbb Z})^{q^n}$. Then the encrypted message is
the concatenation of $e_0 = \mat{R}c_0$ and of
$e_1 = \mat{R}(c_1\oplus e_0)$. With this procedure we keep
the same key, but the ciphertext is two blocks instead of one. If we
have one more block $c_2$ to encrypt, we just add
$e_2 = \mat{R}(c_2\oplus e_1)$, and so on if there are more
blocks. Obviously, the more blocks there are, the smaller the relative
overhead becomes.

An alternative is to incorporate $c_0$ into the key and then transmit
only $\mat{R}(c_1\oplus\mat{R}c_0)$.

\subsubsection*{\textbf{\textsl{ Three examples of implementation}}}

\begin{enumerate}

\item $q=4$ and $n=2$. Then the block size is $16$ bytes and the
  key length is $50$ bits. A brute-force attack requires up to
  $\bigl(2\nu(4)\bigr)^2\cdot 256^{16}= 3^2 2^{44+8\times16} = 3^2 2^{172}$
  trials.  The cost of encryption or decryption of~$m$ blocks is
  $32(m+1)$ multiplications and $16m$ additions of bytes.

\item This is a variant of the preceding. Again $q=4$, $n=2$, and the
  block length is 16 bytes, but this time we use
  Remark~\ref{iter}. Then the length of the key is
  $2(8q-7)+2\left(8(q-1)-7\right) = 84$ bits, and the brute-force
  attack requires up to
  $\bigl(4\nu(3)\nu(4)\bigr)^2\cdot 256^{16}= 3^2 2^{204}$ trials.
  
\item $q=4$ and $n=3$. This leads to blocks of $64$ bytes, and to a
  key length of 75 bits. A brute-force attack requires up to
  $\bigl(2\nu(4)\bigr)^3\cdot 256^{64}= 3^3\,2^{578}$ trials. The cost
  of encryption or decryption of~$m$ blocks is $192(m+1)$
  multiplications and $64m$ additions of bytes.
\end{enumerate}

\end{document}